\documentclass[a4paper]{jpconf}

\pdfoutput=1

\usepackage{mathrsfs}

\usepackage{graphicx}
\usepackage{amsthm}
\usepackage{amssymb}
\usepackage{bm}
\usepackage{amstext}
\usepackage{amsmath}
\usepackage{amsfonts}
\usepackage{cite}
\newcommand{\bd}{\begin{definition}}                
\newcommand{\ed}{\end{definition}}                  
\newcommand{\bc}{\begin{corollary}}                 
\newcommand{\ec}{\end{corollary}}                   
\newcommand{\bl}{\begin{lemma}}                     
\newcommand{\el}{\end{lemma}}                       
\newcommand{\bp}{\begin{proposition}}            
\newcommand{\ep}{\end{proposition}}                
\newcommand{\bere}{\begin{remark}}                  
\newcommand{\ere}{\end{remark}}                     

\newcommand{\bt}{\begin{theorem}}
\newcommand{\et}{\end{theorem}}

\newcommand{\be}{\begin{equation}}
\newcommand{\ee}{\end{equation}}

\newcommand{\bit}{\begin{itemize}}
\newcommand{\eit}{\end{itemize}}
\newtheorem{theorem}{Theorem}[section]
\newtheorem{corollary}[theorem]{Corollary}
\newtheorem{lemma}[theorem]{Lemma}
\newtheorem{proposition}[theorem]{Proposition}
\theoremstyle{definition}
\newtheorem{definition}[theorem]{Definition}
\theoremstyle{remark}
\newtheorem{remark}[theorem]{Remark}



\newcommand{\p}{\partial}

\newcommand{\dd}{{\rm d}}
\begin{document}


\title{The representation of spacetime through steep time functions}
\author{E. Minguzzi}

\address{Dipartimento di Matematica e Informatica ``U. Dini'', Universit\`a degli Studi di Firenze,  Via
S. Marta 3,  I-50139 Firenze, Italy}

\ead{ettore.minguzzi@unifi.it}

\begin{abstract}
In a recent work I showed that  the family of smooth steep time functions can be used to recover the order, the topology and the (Lorentz-Finsler) distance of spacetime. In this work I present  the main ideas entering the proof of the (smooth) distance formula, particularly the product trick which converts metric statements into causal ones. The paper ends with a second proof of the  distance formula valid for globally hyperbolic Lorentzian spacetimes.
\end{abstract}

\section{Introduction}
In a recent work \cite{minguzzi17} I  obtained optimal conditions for the existence of steep time functions on spacetime.  The result was used to characterize the Lorentzian submanifolds of Minkowski spacetime and to prove the (smooth Lorentz-Finsler) distance formula. At the meeting  I announced the latter result while placing it into the broad context of functional representation results for topological ordered spaces \cite{nachbin65,minguzzi12d}. In this work  I shall outline the proof strategy instead, by making use of some illustrations, and by leaving the technical details to the original paper.

Unless otherwise stated, we shall work in the general context of closed cone structures and closed Lorentz-Finsler spaces. Let $M$ be a  connected, Hausdorff, second-countable $C^1$ manifold.
A {\em closed cone structure} $(M,C)$, $C\subset TM\backslash 0$, is a closed cone subbundle of the slit tangent bundle such that $C_x:=C\cap T_{x}M$ is a closed (in the topology of $T_xM\backslash 0$), sharp, convex, non-empty cone. The multivalued map $x \mapsto C_x$ turns out to be upper semi-continuous, that is as $y$ approaches $x$, using the identification of tangent spaces provided by any coordinate system, $C_y\backslash C_x\to \emptyset$, cf.\ \cite{aubin84,minguzzi17} for a more precise definition of the last limit. Other useful differentiability conditions on the multivalued map are continuity and local Lipschitzness \cite{fathi12,minguzzi17}. We stress that our cones $C_x$ are not necessarily strictly convex, a feature that will be important, as we shall see.

The closed cone structure conveys the notion of causality. The {\em causal vectors} are the elements of $C$, the {\em timelike vectors} are the elements of $\textrm{Int} C$ while those belonging to $C\backslash \textrm{Int} C$ might be called {\em lightlike vectors}. In the previous expressions $\textrm{Int}$ is the interior for the topology of $TM\backslash 0$. It can be noticed that $ (\textrm{Int} C)_x \subset \textrm{Int} C_x$ and equality holds for $C^0$ cone structures. A continuous causal curve is an absolutely continuous map $x\colon I \to M$ which has causal derivative almost everywhere. The causal relation $J$ consists of all those pairs of events $(p,q)$ such that there is a continuous causal curve connecting $p$ to $q$ or $p=q$.
{\em Closed} cone structures are particularly well behaved since for them the limit curve theorem holds true. Actually, several other non trivial results hold true for closed cone structures, such as the causal ladder of spacetimes. We shall not explore these findings in this work; the reader is referred to \cite{minguzzi17} for more details.

The length of vectors is measured by the fundamental Lorentz-Finsler function $\mathscr{F}\colon C\to [0,+\infty)$ which is positive homogeneous and concave.
 As a consequence, it satisfies the reverse triangle inequality: for every $y_1,y_2\in C$, $\mathscr{F}(y_1+y_2)\ge \mathscr{F}(y_1)+\mathscr{F}(y_2)$.
 In this work a {\em Lorentz-Finsler space} is just a pair $(M,\mathscr{F})$, otherwise called a {\em spacetime}.

 Lorentzian geometry is obtained with the choice
\begin{equation} \label{nia}
\mathscr{F}(y)=\sqrt{-g(y,y)} ,
\end{equation}
where $g$ is the Lorentzian metric. Its cones $C_x$ are said to be {\em round} since the intersection with an affine plane on $T_xM$ is an ellipsoid.

It must be mentioned that in standard Lorentz-Finsler theories the Finsler Lagrangian $\mathscr{L}=-\frac{1}{2}\mathscr{F}^2$ has Lorentzian vertical Hessian $g$ on $C$, $\mathscr{F}(\p C)=0$, and further regularity properties are demanded for $\mathscr{L}$, e.g.\ $g$ is continuous up to $\p C$. The Lorentz-Finsler spaces of this work are much more general, for instance we do not even assume $\mathscr{F}(\p C)=0$. However, we are left with the problem of introducing convenient regularity conditions on $\mathscr{F}$.

One of the key ideas of our work concerns the very definition of {\em closed Lorentz-Finsler space}. This is one instance of application of the {\em product trick}. We observe that $\mathscr{F}$ with its properties allows us to define a sharp convex non-empty cone at every tangent space of $M^\times=M\times \mathbb{R}$, which we give in two versions
\begin{align}
C^\times&=\{(y,z)\colon y\in C\cup \{0\}, \vert z\vert\le \mathscr{F}(y)\} \backslash\{0,0\} \\
C^\downarrow &=\{(y,z)\colon y\in C\cup \{0\}, z\le \mathscr{F}(y)\} \backslash\{0,0\} \label{ana}
\end{align}
The former version would seem the most natural since in Lorentzian geometry it produces a round cone, but it is actually the latter which will prove to be the most useful. Observe that $C^\downarrow$ is not strictly convex.

The main idea is that the cone structure $(M^\times,C^\downarrow)$ encodes all the information on the Lorentz-Finsler space $(M,\mathscr{F})$ so in order to get the best results for $(M,\mathscr{F})$ we have to impose those differentiability conditions on $\mathscr{F}$ which guarantee that $(M^\times,C^\downarrow)$ is a nice type of cone structure. So we impose that $(M^\times,C^\downarrow)$ is a closed cone structure, a condition which is  equivalent to  the upper semi-continuity of both $C$ and $\mathscr{F}$. By definition, with these conditions $(M,\mathscr{F})$ is a {\em closed Lorentz-Finsler space}. At this point, instead of working out new results for Lorentz-Finsler spaces we can just translate the known results for cone structures.
We shall follow this approach in the construction of steep time functions, but in the main paper one can find this idea applied in many directions, from the definition of causal geodesic to the proof of the notable singularity theorems.

A {\em time function} is a continuous function which increases over every causal curve. A {\em temporal function} is a $C^1$ function $t$ such that $\dd t(y)>0$ for every $y\in C$, hence a time function. Let $f\colon C\to [0,+\infty)$ be positive homogeneous. An $f$-steep function is a $C^1$ function $t$ such that
 \begin{equation}
\dd t(y)\ge f(y)
\end{equation}
for every $y\in C$. It is {\em strictly} $f$-steep if the inequality is strict. Since $\mathscr{F}\ge 0$, every strictly $\mathscr{F}$-steep function is temporal. Our main objective is to characterize those closed Lorentz-Finsler spaces which admit a strictly  $\mathscr{F}$-steep function. If $h$ is a Riemannian metric, with some abuse of terminology, we say that a function is $h$-steep if it is $\vert \cdot\vert_h$-steep.

The (Lorentz-Finsler) length of a continuous causal curve $x\colon [0,1]\to M$, $t \mapsto x(t)$, is \[\ell(x)=\int_0^1 \mathscr{F}(\dot x) \dd t\]
(it is independent of the parametrization).   The (Lorentz-Finsler) distance is defined by: for $(p,q) \notin J$,  $d(p,q)=0$, while for $(p,q)\in J$
\begin{equation}
d(p,q)=\textrm{sup}_x \ell(x) ,
\end{equation}
where $x$ runs over the continuous causal curves which connect $p$ to $q$.

It can be observed that if $x\colon [0,1]\to M$ is continuous causal then $x^\times\colon I \to M^\times$ given by
\[
x^\times(t)=(x(t), \int_0^t \mathscr{F}(\dot x(s)) \dd s)
\]
is continuous causal. Let us set  $x(0)=p$, $x(1)=q$, $P=(p,0)$, $Q=(q, \ell(x))$ then $Q$ is the endpoint of the continuous causal curve $x^\times$. Thus $d(p,q)$ is really an upper bound for the fiber coordinate over $(J^\downarrow)^+(P)\cap \pi^{-1}(q)$ where $J^\downarrow$ is the causal relation on  $(M,C^\downarrow)$, and $\pi$ is the projection  $M^\times\to M$. Notice that $d(p,q)$ would be the maximum if $J^\downarrow$ were a closed relation.

We write $C'>C$ if $C'$ is a $C^0$ cone structure such that $ C\subset \textrm{Int} C'$.
We say that $C$ is a {\em causal} closed cone structure if it does not admit closed continuous causal curves.
A {\em stably causal}  closed cone structure is one for which we can find $C'>C$ such that $C'$ is causal. It is by now well established that under stable causality the most useful causal relation is the Seifert relation
\[
J_S=\bigcap_{C'>C} J'
\]
since it is closed and transitive. Under stronger causality conditions, such as causal simplicity or global hyperbolicity $J$ is closed, a fact which implies that $J_S=J$. In fact, under stable causality $J_S$ is really the smallest closed and transitive relation which contains $J$. This result, conjectured by Low in 1996, and implicit in some of Seifert's works in the early seventies was proved by the author in \cite{minguzzi08b} at least for the $C^2$ Lorentzian theory. However, the proof was really  topological so we could generalize it to cone structures  \cite{minguzzi17}.
It must be said that in \cite{minguzzi17} the proof appears after the construction of the steep time functions, thus the order of presentation is reversed with respect to this work. In fact in that work we looked for the most  convenient proofs while here our goal is just that of  showing that our steep time function construction is natural and reasonable given the experience built on Lorentzian geometry.


The cone structure   $C^\downarrow$ is not round, in fact it is not even strictly convex, but it is this cone structure that will prove to be fundamental for our arguments. So it is natural to consider causality theory for non-round cone structures even if one's interest is in Lorentzian geometry.



Among the results which are known to hold in Lorentzian geometry \cite{minguzzi09c} and which survive in the cone structure case \cite{bernard16,minguzzi17} we can find the following: in a stably causal spacetime the Seifert relation can be represented with the set of smooth temporal functions, that is
\begin{equation} \label{rep}
(p,q)\in J_S\Leftrightarrow t(p)\le t(q) \ \textrm{ for every smooth temporal function } \ t.
\end{equation}

Let us return to the product trick. We know that a closed Lorentz-Finsler space is best seen as a cone structure on $M^\times$, so it is natural to ask what happens after  slightly opening the cones on $M^\times$. What is the geometrical meaning of $J^\downarrow_S$, the Seifert relation on $(M^\times, C^\downarrow)$? We have seen that $d(p,q)$ is not the maximum of the fiber coordinate on $(J^\downarrow)^+(P)\cap \pi^{-1}(q)$ just because $J^\downarrow$ is not closed. But $J^\downarrow_S$ is, and moreover it is contained in $J'{}^\downarrow$ for every opening of the cones  $C'{}^\downarrow>C^\downarrow$. Here this opening implies the enlargement of the cone, so $C'>C$, and that of the graphing function $\mathscr{F}$ thus $\mathscr{F}'>\mathscr{F}$. Actually, as a matter of notation, with the latter inequality we shall always include the validity of  the former.

It will therefore not come as a surprise that
\begin{equation} \label{vyv}
J_S^\downarrow=\{((p,r),(p',r'))\colon (p,p')\in J_S \textrm{ and } r'-r \le D(p,p')\}.
\end{equation}
where
the {\em stable distance} $D\colon M\times M\to [0,+\infty]$ is defined as follows. For $p,q\in M$
\begin{equation}
D(p,q)=\mathrm{inf}_{\mathscr{F}'>\mathscr{F}} d'(p,q),
\end{equation}
where $d'$ is the Lorentz-Finsler distance for the  Lorentz-Finsler space $(M,\mathscr{F}')$.

The closure and transitivity of $J_S^\downarrow$ are reflected in two important properties of $D$, namely upper semi-continuity and  reverse triangle inequality: for every $(p,q)\in J_S$ and $(q,r)\in J_S$
\[
D(p,r)\ge D(p,q)+D(q,r).
\]
It can be noticed that $D$ is better behaved than $d$ since the reverse triangle inequality has wider applicability. It can be shown that for globally hyperbolic cone structures $D=d$, but for less demanding causality conditions $D$, rather than $d$, should be regarded as the most convenient Lorentz-Finsler distance.

We say that a closed Lorentz-Finsler space is {\em stable } if it is stable {\em causally} and {\em metrically}, namely if it is stably causal and stably finite. By {\em stably finite} we mean that $d$ remains finite under small perturbations of the Finsler function $\mathscr{F}$, namely  there is $\mathscr{F}'>\mathscr{F}$ such that $d'$ is finite. Clearly, this condition implies that $D<+\infty$, but one of the main result of our work, a bit  lengthy to be discussed here, states that the converse is true: if $D<+\infty$ then $d$ is stably finite \cite{minguzzi17}.

Finally, we mention that globally hyperbolic spacetimes are stable no matter the choice of $\mathscr{F}$ while stably causal spacetimes are conformally stable.

\section{The distance formula}

Our goal is to prove the distance formula. Alain Connes in the early nineties proposed a strategy for the unification of all fundamental physical forces based on non-commutative geometry \cite{connes94}. A key ingredient was the {\em distance formula}, an expression for the Riemannian distance in terms of the 1-Lipschitz functions on spacetime. Unfortunately, the signature of the metric on spacetime is Lorentzian so Connes did not describe correctly the spacetime manifold: there was no causality or Lorentzian distance.

Parfionov and Zapatrin \cite{parfionov00} proposed to consider a more physical Lorentzian version and for that purpose they introduced the notion of steep time function which we already met.
 Let $d$ denote the Lorentzian distance,  and let $\mathscr{S}$ be the family of $\mathscr{F}$-steep time functions. The Lorentzian version of Connes' distance formula would be, for every $p,q\in M$
\begin{equation} \label{dap}
 d(p,q)=\mathrm{inf} \big\{[f(q)-f(p)]^+\colon \ f \in \mathscr{S}\big\}.
\end{equation}
where $c^+=\max\{0, c\}$.

Progress in the proof of the formula was made by Moretti \cite{moretti03} and Franco \cite{franco10} for globally hyperbolic spacetimes. However, in their versions the  functions appearing in (\ref{dap}) were not differentiable everywhere a fact which was a little annoying since in Connes' program the Dirac operator acts on them.
We proved the smooth version for stable spacetimes as part of the next more general theorem on the representability of spacetime by steep time function \cite{minguzzi17}.

\begin{theorem} \label{aas}
Let $(M,\mathscr{F})$ be a closed Lorentz-Finsler space and let $\mathscr{S}$ be the family of smooth strictly  $\mathscr{F}$-steep temporal functions. The Lorentz-Finsler space  $(M,\mathscr{F})$ is stable if and only if $\mathscr{S}$  is non-empty. In this case $\mathscr{S}$   represents
\begin{itemize}
\item[(a)] the order $J_S$, namely $(p, q)\in J_S \Leftrightarrow f(p)\le f(q), \ \forall f \in \mathscr{S}$;
\item[(b)] the manifold topology, namely for every open set $O\ni p$ we can find $f,h \in \mathscr{S}$ in such a way that
$p\in \{q\colon f(q)>0\}\cap \{q\colon h(q)<0\}\subset O$;
\item[(c)] the stable distance, in the sense that  the distance formula holds true: for every $p,q\in M$
\begin{equation}
 D(p,q)=\mathrm{inf} \big\{[f(q)-f(p)]^+\colon \ f \in \mathscr{S}\big\}.
\end{equation}
\end{itemize}
Moreover, {\em strictly} can be dropped.
\end{theorem}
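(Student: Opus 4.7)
The guiding idea is the product trick. If $t\in C^1(M)$ and one sets $T(p,r)=t(p)-r$, then for $(y,z)\in C^\downarrow$
\[
\dd T(y,z)=\dd t(y)-z\ge \dd t(y)-\mathscr{F}(y),
\]
while if $y=0$ then necessarily $z<0$ so positivity of $\dd T$ is automatic. Hence $T$ is a smooth temporal function on $(M^\times,C^\downarrow)$ if and only if $t$ is smooth strictly $\mathscr{F}$-steep, i.e.\ $t\in \mathscr{S}$. Since every opening of $C^\downarrow$ contains a subcone of the form $C'{}^\downarrow$ with $\mathscr{F}'>\mathscr{F}$, stability of $(M,\mathscr{F})$ is equivalent to stable causality of $(M^\times,C^\downarrow)$. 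The plan is then to reduce each of the three items to a causality-theoretic statement on $(M^\times,C^\downarrow)$ and invoke the representation (\ref{rep}) there.

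The ``if'' half of the first sentence is immediate: $t\in \mathscr{S}$ produces the temporal function $T$ on $M^\times$, continuity of $\dd T$ allows us to open $C^\downarrow$ slightly while preserving $\dd T>0$, and so $(M^\times,C^\downarrow)$ is stably causal. For the converse I would apply the cone-structure version of the Geroch--Hawking--Seifert theorem \cite{minguzzi17}: stable causality of $(M^\times,C^\downarrow)$ yields a smooth temporal $T$ on $M^\times$. Such a $T$ need not have the special form $t(p)-r$, so I would extract an element of $\mathscr{S}$ from it by passing to a level set: since $(0,-1)\in C^\downarrow$ one has $\p T/\p r<0$ everywhere, and $\{T=c\}$, viewed as a graph $r=\phi(p)$, satisfies
\[
\dd\phi(y)=\frac{(\p T/\p p)(y)}{-\p T/\p r}>\mathscr{F}(y),\qquad y\in C\setminus 0,
\]
this being the rewriting of $\dd T(y,\mathscr{F}(y))>0$. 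The main technical obstacle is ensuring that the level sets are \emph{global} graphs, i.e.\ that each fibre restriction $T(p,\cdot)$ is surjective onto $\mathbb{R}$; this is the step where stable finiteness is genuinely used, and one expects to handle it by modifying $T$ additively in $r$ or by superposing several temporal functions to force fibre-surjectivity.

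With $\mathscr{S}$ nonempty the three representation statements drop out of (\ref{vyv}) and (\ref{rep}) applied to $(M^\times,C^\downarrow)$. For (c), write $P=(p,0)$ and $Q_r=(q,r)$; identity (\ref{vyv}) gives
\[
D(p,q)=\sup\{r\in \mathbb{R}\colon (P,Q_r)\in J_S^\downarrow\},
\]
nonempty precisely when $(p,q)\in J_S$. The inequality $D(p,q)\le \inf_{t\in \mathscr{S}}[t(q)-t(p)]^+$ is immediate: for $t\in \mathscr{S}$, $T(p,r)=t(p)-r$ is temporal, so $(P,Q_r)\in J_S^\downarrow$ forces $T(P)\le T(Q_r)$, i.e.\ $r\le t(q)-t(p)$. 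For the reverse inequality, any $r>D(p,q)$ has $(P,Q_r)\notin J_S^\downarrow$, so (\ref{rep}) on $M^\times$ supplies a smooth temporal $T$ with $T(P)>T(Q_r)$; the level-set function $\phi\in \mathscr{S}$ normalized so that $\phi(p)=0$ then satisfies $\phi(q)<r$, whence $\inf_{t\in \mathscr{S}}[t(q)-t(p)]^+<r$ and (c) follows by letting $r\downarrow D(p,q)$.

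Part (a) is the case $r=0$ of (c); indeed $(P,Q_0)\in J_S^\downarrow\Leftrightarrow (p,q)\in J_S$, and the same level-set extraction turns the representation of $J_S^\downarrow$ by temporal functions on $M^\times$ into the representation of $J_S$ by $\mathscr{S}$. Part (b) then follows from (a) by the standard fact that in a stably causal closed cone structure the ``Alexandrov diamonds'' $\{f>c_1\}\cap\{h<c_2\}$ with $f,h$ in any family representing $J_S$ form a neighbourhood base at each point. Finally, the assertion that ``strictly'' may be dropped is clear: strictly $\mathscr{F}$-steep functions form a sub-family of $\mathscr{F}$-steep ones so the infimum in (c) can only decrease, and a merely $\mathscr{F}$-steep $t$ still gives $\dd T\ge 0$ on $C^\downarrow$, hence $T$ non-decreasing along $J_S^\downarrow$, so the upper-bound direction of (c) and the forward direction of (a) go through verbatim.
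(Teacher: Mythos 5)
Your reduction of parts (a)--(c) to causality statements on $(M^\times,C^\downarrow)$ --- the level-set/graph extraction, the two inequalities obtained from (\ref{rep}) and (\ref{vyv}) --- is exactly the paper's route, and that portion of the argument is sound. The genuine gaps sit in the equivalence ``stable $\Leftrightarrow$ $\mathscr{S}\neq\emptyset$'', which is where the real work lies. First, you assert that stability of $(M,\mathscr{F})$ is equivalent to stable causality of $(M^\times,C^\downarrow)$ and lean on this in both directions; this is unjustified and is not what the paper does. The paper states explicitly that it is \emph{not} known whether $(M^\times,C^\downarrow)$ is stably causal a priori, which is precisely why the naive route (open $C^\downarrow$ in $TM^\times$, run Hawking's averaging, invoke the cone-structure existence theorem for temporal functions) is unavailable. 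Instead the time function on $M^\times$ is built by averaging over a one-parameter family $C^\downarrow_a$ obtained from $(C_a,\mathscr{F}_a)$ via the analogue of (\ref{ana}), i.e.\ cones that are \emph{not} opened in the fibre direction; these are causal by a projection argument, and the continuity of the resulting $\tau^\downarrow$ is a separate nontrivial point resting on fibre-translation invariance and on the sharpness of $C$. Second, in the ``if'' direction stable causality of $(M^\times,C^\downarrow)$ does not yield stable \emph{finiteness}: the paper's argument is that a single $t\in\mathscr{S}$ forces $D(p,q)\le t(q)-t(p)<+\infty$, and then invokes the separate, nontrivial theorem of \cite{minguzzi17} that $D<+\infty$ implies $d$ is stably finite. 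You never address stable finiteness in this direction.

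Third, the fibre-surjectivity of the level sets --- which you correctly flag as the main obstacle --- cannot be obtained by ``modifying $T$ additively in $r$'' (that destroys temporality on $C^\downarrow$) or by an unspecified superposition. The mechanism is Geroch's trick: construct $\tau^\uparrow$ with the time-reversed cones, set $\tau=\log\vert\tau^\uparrow/\tau^\downarrow\vert$, and use $D<+\infty$ to show that $\tau^\downarrow\to 0$ towards the future and $\tau^\uparrow\to 0$ towards the past along each fibre, so that $\tau$ attains every real value on each fibre; smoothing then passes through Theorem \ref{moz}. Finally, (b) does not follow from (a) by a ``standard fact'': in a merely stably causal spacetime an order-representing family need not generate the manifold topology through sets of the form $\{q\colon f(q)>0\}\cap\{q\colon h(q)<0\}$, and a separate construction is required (compare the volume-function argument used to prove part (b) of Theorem \ref{oor} even in the globally hyperbolic case).
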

As said above we shall concentrate on the distance formula, i.e.\ point (c). We have also a version for stably causal rather than stable spacetimes. However, in that version the steep functions have to be taken with codomain $[-\infty,+\infty]$ and steep just on the finite set.

Here the main idea is very simple and follows from Eq.\ (\ref{vyv}): the distance formula is a consequence of (\ref{rep}) applied to $(M^\times, C^\downarrow)$, thus as a first step we have to show that this product spacetime is stably causal (we shall return to it later on). At this point the level sets of temporal functions on $(M^\times, C^\downarrow)$ are really local graphs of strictly $\mathscr{F}$-steep functions on $M$. It is at this step that the shape of the cone $C^\downarrow$ becomes essential, since it is thanks to the fact that $C^\downarrow$ contains the fiber direction that the level set is a (univalued) graph, see Figure \ref{mkz}.

 \begin{figure}[ht]
\begin{center}
 \includegraphics[width=14cm]{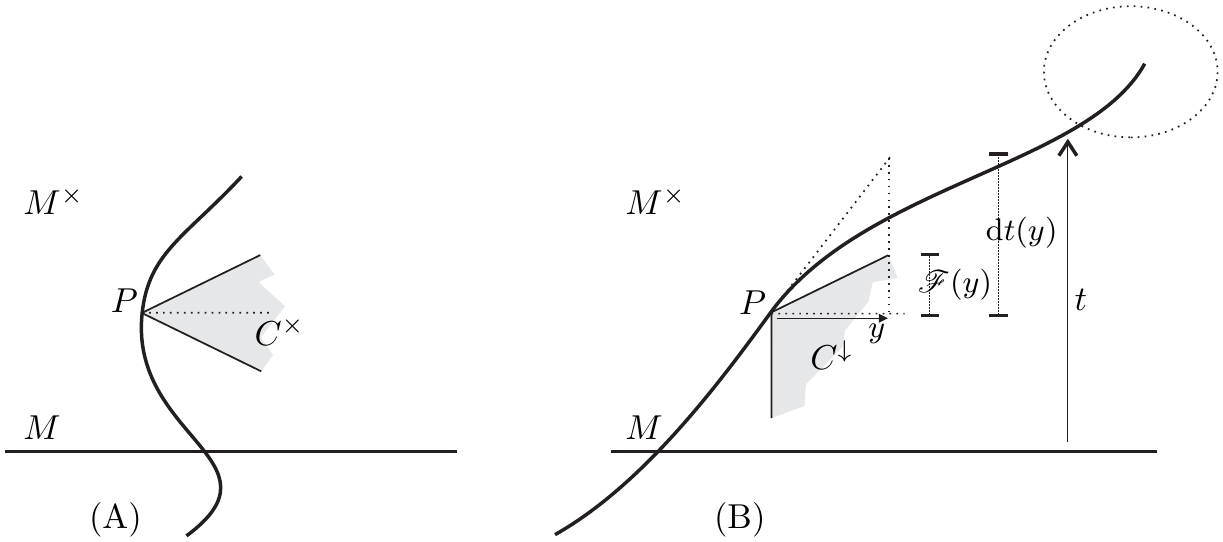}
\end{center}
\caption{The level set of a temporal function $\tau$ on $(M^\times, C^\times)$ and on $(M^\times, C^\downarrow)$. In the latter case, due to the shape of the cone, it is necessarily the local graph of a function $t$ which, furthermore, is strictly $\mathscr{F}$-steep: for every $y\in C$, $\dd t (y)>\mathscr{F}(y)$. The domain of $t$ might not be the whole $M$  since $t$ might blow up. This problem is resolved by constructing $\tau$ directly  through a modification of Hawking's averaging method and by using the condition $D<+\infty$.} \label{mkz}
\end{figure}

Let us see what are the implications of (\ref{rep}) on $(M^\times, C^\downarrow)$.  Let $P=(p,r)$ be a generic point, let $q\in J_S^+(p)$ and let $Q=(q,r')=(q,r+D(p,q))$. By  Eq.\ (\ref{vyv}) $(P,Q)\in J_S^\downarrow$. By (\ref{rep}) if $\tau$ is  temporal on $(M,C^\downarrow)$, $\tau(P)\le \tau(Q)$ which means that $Q$ must stay in the subgraph of the level set of $\tau$ passing through $P$. In other words $t(q)-t(p)\ge r'-r=D(p,q)$, see Fig.\ \ref{sxe}. But for every strictly $\mathscr{F}$-steep function $t\colon M\to \mathbb{R}$ we can find a temporal function $\tau(P)=t(p)-r$ which has the graph of $t$  as level set. Thus the previous argument shows that for every strictly $\mathscr{F}$-steep function $t$, $D(p,q)\le t(q)-t(p)$. We now show that the infimum of the right-hand side is really $D$, thus concluding the proof. In fact (\ref{rep}) states that the temporal functions separate points not belonging to the Seifert relation. Let $Q_\epsilon=(q,r+D(p,q)+\epsilon)$, for $\epsilon>0$, then by  Eq.\ (\ref{vyv}) $(P,Q_\epsilon)\notin J_S^\downarrow$. As a consequence, there is a temporal function $\tau$ on $(M^\times, C^\downarrow)$ which separates them in the sense that $\tau(Q_\epsilon)<\tau(P)$, which implies that the graphing function of the $\tau$-level set passing through $P$ satisfies $t(q)-t(p) \le D(p,q)+\epsilon$, see Fig.\ \ref{sxe}. Due to the arbitrariness of $\epsilon$ we get the distance formula for $(p,q)\in J_S$.

 \begin{figure}[ht]
\begin{center}
 \includegraphics[width=10cm]{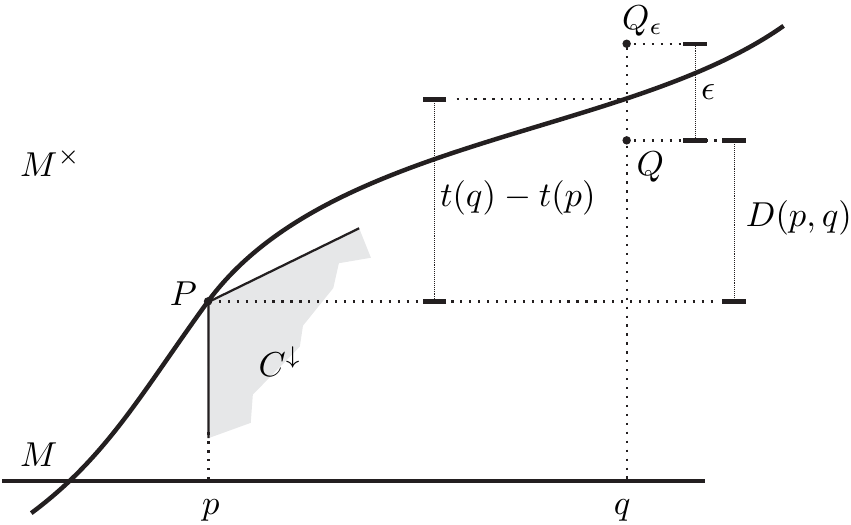}
\end{center}
\caption{Since $(P,Q_\epsilon)\notin J_S^\downarrow$, there is a temporal function $\tau\colon M^\times \to\mathbb{R}$ which separates $P$ and $Q_\epsilon$ in the sense that $\tau(Q_\epsilon)<\tau(P)$. The figure displays the level set of $\tau$ passing though $P$.} \label{sxe}
\end{figure}

Actually, the function $t$ might not be defined everywhere but it turns out that it is sufficient that there exists {\em one} (global) strict $\mathscr{F}$-steep function to prove that such functions separate points not belonging to $J_S^\downarrow$. So we are left with two problems
\begin{itemize}
\item[(i)] We have yet to prove that  $(M,C^\downarrow)$ is stably causal, a fact assumed when we made use of  property (\ref{rep}) on $(M^\times,C^\downarrow)$. This result can be proved constructing directly a time function $\tau$ on $(M,C^\downarrow)$. In fact, we can do this with a sort of Hawking's averaging method.
\item[(ii)] We have  to show that this function $\tau$, suitably smoothed, gives a temporal function on $(M,C^\downarrow)$ whose level sets intersects every $\mathbb{R}$-fiber (so that the graphing function $t$ of a level set  is globally defined).
\end{itemize}
While $(i)$ can be accomplished under stable causality of $(M,C)$, (ii) holds only if $(M,C)$ is stable. In fact, we know that the inequality
$D(p,q)\le t(p)-t(q)$ holds for every strictly $\mathscr{F}$-steep function $t$, thus the existence of just one such function implies that $D$ is finite.

Concerning step $(i)$ we recall that Hawking's averaging method \cite{hawking68,hawking73} applied to $(M^\times,C^\downarrow)$ would consist in the introduction of a positive normalized measure $\mu$ on $M^\times$, absolutely continuous with respect to the Lebesgue measure of any chart, in the introduction of a one-parameter family of cones $C^\downarrow_a{}$, $a\in [0,3]$, $C^\downarrow_a<C^\downarrow_b$, for $a<b$, $C^\downarrow_0=C^\downarrow$, and in the definition of
\[
\tau^\downarrow(P)=-\int_1^2 \mu\big(J_{C^\downarrow_a}^+(P)\big) \dd a .
\]
This construction gives a time function on $(M^\times,C^\downarrow)$ provided this spacetime is stably causal. In fact one would have to choose $C^\downarrow_3$ (stably) causal. Unfortunately, we do not know if $C^\downarrow$ is stably causal. Instead, we construct the function $\tau^\downarrow$ using a different choice for $C^\downarrow_a$ which is causal by construction, namely $C^\downarrow_a$ is built from $(C_a,\mathscr{F}_a)$ according to the analog of Eq.\ (\ref{ana}) where the one-parameter family $(C_a,\mathscr{F}_a)$ is defined for   $a\in [0,3]$, and satisfies: $C_a<C_b$ and $\mathscr{F}_a<\mathscr{F}_b$, for $a<b$, $C_0=C$ and $\mathscr{F}_0=\mathscr{F}$, $C_3$ is stably causal. In other words, the cones $C^\downarrow_a$ do not open in the direction of the $\mathbb{R}$-fiber which for all of them remains lightlike. They are causal because with this definition the projection of a continuous $C_a^\downarrow$-causal curve is a continuous $C_a$-causal curve (unless it coincides with a fiber), and so there cannot be a closed causal curve on $M^\times$ as there would be one on the base, which is impossible as $C_3$ is stably causal.

With this definition $\tau^\downarrow$ is clearly increasing over continuous $C^\downarrow$-causal curves since the  argument of the integral is for every $a$.
What is puzzling is the fact that $\tau^\downarrow$ is continuous despite the fact that the cones are not opened as in Hawking's original prescription. This point cannot be understood intuitively but it is a consequence of two facts: (a) the invariance under the fiber translations of the cone structure $C^\downarrow$, (b) the fact that $C^\downarrow$ projects to a {\em sharp} cone $C$. Notice that the latter property would not hold if $C^\downarrow$ were round (Lorentzian), since the projection would be a half-space.

 Coming to step (ii) the function $\tau$ can be smoothed thanks to a powerful result \cite{minguzzi17} which improves a previous result by Chru\'sciel, Grant and the author \cite[Th.\ 4.8]{chrusciel13}. I recall the theorem though I shall not enter into the details of its use (one can apply it directly to the function $\tau$ on $M^\times$ or to the graphing function $t$ on $M$).
\begin{theorem} \label{moz}
  Let $({ M},C)$ be a closed cone structure and
 let $\tau\colon M\to \mathbb{R}$ be a continuous function. Suppose that there is a $C^0$ proper cone structure $\hat C>C$ and continuous functions  homogeneous of degree one in the fiber $\underline F, \overline F\colon \hat C\to \mathbb{R}$ such that for every  $\hat C$-timelike  curve $x\colon [0,1]\to M$
 \begin{equation} \label{mos}
 \int_x \underline F(\dot x) \dd t\le \tau(x(1))-\tau(x(0))\le \int_x \overline F(\dot x) \dd t.
 \end{equation}
 Let $h$ be an arbitrary Riemannian metric, then for every function $\alpha\colon { M} \to (0,+\infty)$ there exists
a smooth  function $\hat{\tau}$ such that $\vert \hat\tau-\tau\vert <\alpha$ and for every $v\in C$
\begin{equation} \label{kid}
\underline F(v)- \Vert v\Vert_h \le \dd \hat \tau(v) \le \overline F(v)+ \Vert v\Vert_h .
\end{equation}
Similar versions, in which some of the functions $\underline F, \overline F$ do not exist hold true. One has just to drop the corresponding inequalities in (\ref{kid}).
\end{theorem}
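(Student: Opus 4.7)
The plan is to prove this by a mollification-plus-partition-of-unity argument, where the auxiliary Riemannian metric $h$ absorbs every error incurred. The integral inequality (\ref{mos}) is the qualitative substitute for a pointwise bound: applied to arbitrarily short $\hat C$-timelike segments, it forces $\tau$ to be locally Lipschitz along integral curves of $\hat C$-timelike vector fields, and its almost-everywhere directional derivative to be pinched between $\underline F$ and $\overline F$ on $\hat C$-vectors.

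First I would fix a locally finite atlas $\{(U_i,\varphi_i)\}$ with a subordinate smooth partition of unity $\{\phi_i\}$, chosen fine enough that on each chart $C|_{U_i}$ sits in a constant (in chart coordinates) conic neighborhood still contained in $\hat C|_{U_i}$; this is possible because $C\subset \text{Int}\,\hat C$ and $C$ is upper semi-continuous. Shrinking further if necessary, I also require that $\underline F$ and $\overline F$ vary by less than a prescribed tolerance, with respect to any fixed $h$-normalization on the fiber, across each $U_i$. Then on each chart I mollify $\tau$ by a smooth radial kernel $\rho_{\epsilon_i}$ with $\epsilon_i$ much smaller than the chart scale, obtaining $\tau_i^{\epsilon_i} = \tau\ast\rho_{\epsilon_i}$.

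Next I evaluate $\dd\tau_i^{\epsilon_i}(v)$ for $v\in C_x$, $x\in U_i$: it is the $\rho_{\epsilon_i}$-average of directional derivatives of $\tau$ at nearby points in the direction $v$, which (by differentiating the integral bound (\ref{mos}) along the straight $\hat C$-timelike segment $s\mapsto y+sv$) sits between $\underline F(v)$ and $\overline F(v)$ up to an $o_{\epsilon_i}(1)$ error controlled by the modulus of continuity of $\underline F,\overline F$ on the unit $h$-sphere in $\hat C$. I then set $\hat\tau=\sum_i\phi_i\tau_i^{\epsilon_i}$ and, using $\sum_i \dd\phi_i=0$, write
\begin{equation}
\dd\hat\tau(v)=\sum_i\phi_i(x)\,\dd\tau_i^{\epsilon_i}(v)+\sum_i\bigl(\tau_i^{\epsilon_i}(x)-\tau(x)\bigr)\dd\phi_i(v).
\end{equation}
The first sum is a convex combination of quantities already pinched between $\underline F(v)\pm o(\|v\|_h)$ and $\overline F(v)\pm o(\|v\|_h)$; the second is bounded by $\sup_i|\tau_i^{\epsilon_i}-\tau|$ times the $h$-norm of a locally finite sum of $\dd\phi_i$'s, hence by a constant multiple of $\epsilon_i$ times $\|v\|_h$.

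The main obstacle will be the diagonal choice of the mollification scales $\epsilon_i$: they must be small enough that the directional-derivative bound for $\tau_i^{\epsilon_i}$ at $x$ transfers faithfully from the nearby $\hat C$-vectors to the given $v\in C_x$ (requiring $\epsilon_i$ smaller than the gap between $C$ and $\p\hat C$ in the chart, and smaller than the modulus of continuity scale of $\underline F,\overline F$), yet also small enough that $|\tau_i^{\epsilon_i}-\tau|$ multiplied by the partition-of-unity gradients is dominated by a prescribed fraction of $\|v\|_h$ and of $\alpha$. This is exactly the diagonal bookkeeping refined in \cite{minguzzi17} from the earlier argument in \cite{chrusciel13}; the only genuinely new feature is carrying both the lower and the upper estimate simultaneously, but the two inequalities are decoupled in the computation above, which immediately yields the ``similar versions'' remark at the end of the statement.
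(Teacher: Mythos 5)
The paper does not actually prove Theorem \ref{moz} here---it only recalls it from \cite{minguzzi17}, which refines \cite[Th.\ 4.8]{chrusciel13}---but your chart-by-chart mollification followed by a partition-of-unity gluing, with the derivative estimate obtained by applying (\ref{mos}) to short straight $\hat C$-timelike segments and the error term $\sum_i(\tau_i^{\epsilon_i}-\tau)\dd\phi_i(v)$ absorbed into $\Vert v\Vert_h$ through a diagonal choice of the scales $\epsilon_i$, is precisely the argument of those references. The outline is correct, including the observation that the lower and upper estimates decouple, which yields the ``similar versions'' remark.
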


More interesting, and peculiar to the proof, is how we solved the problem of constructing $\tau$ in such a way that its level sets intersect every $\mathbb{R}$-fiber. Here the idea comes from Geroch's original construction of the time function on globally hyperbolic spacetime \cite{hawking73}. Basically, we repeat the construction above  with the opposite cones, thus obtaining a time function
\[
\tau^\uparrow(P)=\int_1^2 \mu\big(J_{C^\downarrow_a}^-(P)\big) \dd a .
\]
Then we define $\tau=\log \vert{\tau^\uparrow}/{\tau^\downarrow} \vert$. If we can show that $\tau^\downarrow\to 0$ moving to the future along a fiber (the downward direction in our figures) and that $\tau^\uparrow \to 0$  moving to the past along a fiber (the upward direction in our figures) then $\tau \to +\infty$ in the former limit and $\tau \to -\infty$ in the latter, thus by continuity the level set $\tau=0$ intersects every fiber. The condition $D<+\infty$ guarantees precisely the mentioned limits. Indeed if $K\subset M$ is a compact set then $J_{C^\downarrow_a}^+(P)\cap \pi^{-1}(K)$ will be upper bounded in the extra coordinate, with bound going to $-\infty$ as the fiber coordinate of $P$ goes to $-\infty$. This fact should not totally come as a surprise since as we learned in Fig.\ \ref{sxe}, the function $D$ controls how much we can go up in the extra-coordinate with respect to the starting point.

Our discussion of points (i) and (ii) finishes our exposition of the proof that under the stable condition $D<+\infty$ there are (global) strictly $\mathscr{F}$-steep time functions, which was the missing step in our argument for proving the distance formula.

Incidentally the existence of  $\mathscr{F}$-steep functions is important in the  problem of embedding Lorentzian manifolds into Minkowski spacetime, as shown by M\"uller and S\'anchez \cite{muller11}. As a consequence we also proved a long sought for characterization of the Lorentzian submanifolds of Minkowski spacetime. According to it the Lorentzian submanifolds are precisely the stable spacetimes \cite{minguzzi17}, or more precisely

\begin{theorem}
Let $(M,g)$ be a $n+1$-dimensional Lorentzian spacetime  endowed with a $C^{k}$, $3\le k\le \infty$, metric. $(M,g)$ admits a $C^k$ isometric embedding in Minkowski spacetime $E^{N,1}$, for some $N>0$, if and only if $(M,g)$ is stable.
\end{theorem}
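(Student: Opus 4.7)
The plan is to dispatch the two implications separately, using Theorem~\ref{aas} as the bridge that converts the geometric notion of stability into the existence of smooth $\mathscr{F}$-steep temporal functions.

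For the necessity direction, suppose $\Phi\colon M\to E^{N,1}$ is a $C^k$ isometric embedding, with time orientations chosen compatibly. Writing the Minkowski coordinates as $(x^0,x^1,\dots,x^N)$, set $t:=x^0\circ\Phi$. Any future-directed causal $y\in TM$ is pushed by $\Phi$ to a future-directed causal vector in Minkowski, whence
\[
dt(y)=dx^0(\Phi_*y)\ge \sqrt{-g_{\mathrm{Mink}}(\Phi_*y,\Phi_*y)}=\sqrt{-g(y,y)}=\mathscr{F}(y),
\]
so $t$ is a $C^k$ $\mathscr{F}$-steep temporal function on $M$. Applying Theorem~\ref{aas} (after a preliminary smoothing, which can be carried out via Theorem~\ref{moz} with $\underline F=\overline F= dt$) one concludes that $(M,g)$ is stable.

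For the sufficiency direction, which is the substantive half, Theorem~\ref{aas} supplies a smooth strictly $\mathscr{F}$-steep temporal function $t\colon M\to\mathbb{R}$. Consider the symmetric $(0,2)$-tensor
\[
h:=g+dt\otimes dt.
\]
I claim $h$ is a $C^k$ Riemannian metric on $M$. Indeed, on a spacelike $y$ one already has $g(y,y)>0$, while on a nonzero causal $y$ the strict steepness gives $(dt(y))^2>-g(y,y)$; hence $h(y,y)>0$ in every case, and the regularity of $g$ together with smoothness of $t$ give $h\in C^k$. The Nash--G\"unther isometric embedding theorem (valid for $3\le k\le\infty$) then furnishes a $C^k$ isometric embedding $\phi\colon(M,h)\hookrightarrow\mathbb{R}^N$ for some $N$. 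Setting
\[
\Phi\colon M\to E^{N,1},\qquad p\mapsto(t(p),\phi(p)),
\]
one computes $\Phi^*g_{\mathrm{Mink}}=-dt\otimes dt+\phi^*\delta_{\mathrm{eucl}}=-dt\otimes dt+h=g$. Since $\phi$ is already an embedding, so is $\Phi$, yielding the sought $C^k$ isometric embedding.

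The main obstacle is concentrated entirely in the sufficiency direction, and more precisely in producing the strictly $\mathscr{F}$-steep temporal function of the claimed regularity: this is exactly the content of Theorem~\ref{aas}, which was obtained above by the product trick on $(M^\times,C^\downarrow)$ and the Hawking-type averaging of $\tau^\downarrow$ and $\tau^\uparrow$. Once the steep function is in hand, the combination with Nash--G\"unther is essentially a mechanical verification. The \emph{strictness} of the steepness is crucial: without it, $h$ would fail to be positive definite along lightlike directions and the Riemannian embedding step would not apply. It is precisely this strictness, guaranteed by stability (equivalently $D<+\infty$), that converts the Lorentzian embedding problem into a solvable Riemannian one.
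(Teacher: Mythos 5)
Your overall architecture is the intended one: the paper itself gives no self-contained proof of this theorem but explicitly attributes the link between $\mathscr{F}$-steep functions and Minkowski embeddings to M\"uller and S\'anchez, and your sufficiency argument is exactly that mechanism. Given a smooth strictly $\mathscr{F}$-steep temporal function $t$ from Theorem \ref{aas}, the tensor $h=g+\dd t\otimes \dd t$ is indeed positive definite (your case analysis covering causal and spacelike directions is complete, since every nonzero vector is one or the other up to sign), Nash applies in the stated regularity range, and $\Phi=(t,\phi)$ pulls back the Minkowski metric to $g$. This half is correct, and your closing remark that strictness is what makes $h$ nondegenerate along the light cone is exactly the right emphasis.

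The necessity direction has a genuine gap in the smoothing step. From the embedding you correctly get that $t=x^0\circ\Phi$ is a $C^k$ $\mathscr{F}$-steep temporal function, but Theorem \ref{aas} requires a \emph{smooth} one, and Theorem \ref{moz} with $\underline F=\overline F=\dd t$ only delivers $\dd\hat\tau(v)\ge \dd t(v)-\Vert v\Vert_h\ge \mathscr{F}(v)-\Vert v\Vert_h$, which is weaker than $\mathscr{F}$-steepness no matter how small $h$ is chosen: the inequality $\dd t\ge\mathscr{F}$ has no margin to absorb the $-\Vert v\Vert_h$ slack (it is an equality whenever $\Phi_* y$ has vanishing spatial part). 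The fix is easy but must be made explicit: pull back the slightly rescaled Minkowski metric $\eta_\epsilon=-(1+\epsilon)(\dd x^0)^2+\sum_i(\dd x^i)^2$, whose Finsler function $\mathscr{F}'$ satisfies $\mathscr{F}'(y)^2=\mathscr{F}(y)^2+\epsilon\,(\dd t(y))^2$ on $C$, so that $\sqrt{1+\epsilon}\,t$ is $\mathscr{F}'$-steep with a quantitative margin over $\mathscr{F}$; choosing $h$ with $\Vert v\Vert_h\le \mathscr{F}'(v)-\mathscr{F}(v)$ on $C$ then makes Theorem \ref{moz} close the argument. Alternatively, and more economically, you can bypass Theorem \ref{aas} here altogether: the cones of $\eta_\epsilon$ pull back to $C'>C$ on which $t$ is still temporal (hence $C'$ is causal, giving stable causality), and $d'(p,q)\le\sqrt{1+\epsilon}\,(t(q)-t(p))<+\infty$ gives stable finiteness directly, which is the definition of stability.
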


As for the optimal conditions for the validity of the distance formula with $D$ replaced by $d$, i.e.\ that originally suggested by Parfionov and Zapatrin, we found that in Lorentzian manifolds endowed with $C^1$ metrics the formula holds if and only if the spacetime is causally continuous and the Lorentz-Finsler  distance is finite and continuous \cite{minguzzi17}. In particular, globally hyperbolic spacetimes are of this type and in this case the family of functions can be further restricted, so for instance, the functions can be taken Cauchy.



\section{A second proof}

In this section we give a different proof of  the smooth distance formula for Lorentzian globally hyperbolic spacetimes endowed with $C^{2,1}$ metrics. The proof might be adapted to the Lorentz-Finsler case and to  weaker regularity assumptions, however there seems to be no point in pursuing this direction since the strategy of the previous section really allowed us to prove a stronger version  under much weaker conditions.

The approach of this section was really the first one employed by the author to prove the distance formula. Though it has the limitation that it does not give the optimal conditions for the existence of smooth steep time functions,  it  is otherwise perfectly fine if one is interested in globally hyperbolic spacetimes endowed with sufficiently regular metrics.

The proof uses Theorem \ref{moz}, a previous construction of steep functions by the author  \cite{minguzzi16a},  stability results for Cauchy temporal functions  and Lorentzian distance \cite{minguzzi17}, and a previous non-differentiable version of the distance formula proved by Franco \cite{franco10} (thus it is certainly not self contained). For shortness the final step in the proof assumes familiarity with Franco's paper and more generally with
the details of \cite{minguzzi16a,franco10}.  A {\em Cauchy} time function is a time function whose restriction to any inextendible continuous causal curve has image  $\mathbb{R}$. In the next theorems $\mathscr{F}$ is as in Eq.\ (\ref{nia}).

\begin{theorem} \label{spa}
Let $(M,g)$ be a globally hyperbolic spacetime and let $h$ be any Riemannian metric on $M$. There is  a smooth  $h$-steep Cauchy  temporal function $\tau$.
Moreover, there is  a smooth $h$-steep Cauchy  temporal function which is $\mathscr{F}$-steep.
\end{theorem}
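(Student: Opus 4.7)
I would treat the two assertions in sequence, the first by direct appeal to prior work, the second by combining the four ingredients listed in the section's preamble (Theorem~\ref{moz}, the construction of \cite{minguzzi16a}, the stability results for Cauchy temporal functions and Lorentzian distance in \cite{minguzzi17}, and Franco's non-smooth distance formula \cite{franco10}).

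For the first assertion I would invoke directly the construction of \cite{minguzzi16a}, which on a globally hyperbolic spacetime produces, starting from a Bernal--S\'anchez Cauchy temporal function and inflating its gradient over a Cauchy foliation, a smooth $h$-steep Cauchy temporal function for any Riemannian metric $h$. Since the statement permits any Riemannian metric, I may apply the construction with $4h$ in place of $h$ and obtain a smooth Cauchy temporal function $\tau_1$ with $\dd\tau_1(v)\ge 2\vert v\vert_h$ for every $v\in C$.

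For the ``moreover'' assertion the plan is to add to $\tau_1$ a smoothed $\mathscr{F}$-steep increment. Franco's non-smooth distance formula for globally hyperbolic $(M,g)$ \cite{franco10} furnishes a continuous function $\tilde f\colon M\to\mathbb R$ (concretely, the signed Lorentzian distance from a smooth spacelike Cauchy hypersurface $S$, which is continuous thanks to global hyperbolicity) satisfying the integrated $\mathscr{F}$-steepness hypothesis of Theorem \ref{moz}:
\[
\int_0^1 \mathscr{F}(\dot x(t))\,\dd t \le \tilde f(x(1))-\tilde f(x(0))
\]
for every continuous causal curve $x\colon[0,1]\to M$. Feeding $\tilde f$ into Theorem \ref{moz} with $\underline F=\mathscr F$, no upper bound $\overline F$, Riemannian metric $\tfrac14 h$, and approximation parameter $\alpha$ chosen arbitrarily small, I obtain a smooth $\hat f$ with $\dd \hat f(v)\ge \mathscr F(v)-\tfrac12\vert v\vert_h$ for every $v\in C$. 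Setting $\tau:=\tau_1+\hat f$ then gives a smooth function with
\[
\dd\tau(v)\ \ge\ \tfrac32\vert v\vert_h+\mathscr F(v),\qquad v\in C,
\]
which is simultaneously $h$-steep and $\mathscr{F}$-steep (and in particular temporal).

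The main obstacle is to verify that $\tau$ remains a \emph{Cauchy} temporal function after the addition of $\hat f$, which is in general unbounded along inextendible causal curves. Here I would invoke the stability results for Cauchy temporal functions and Lorentzian distance from \cite{minguzzi17}: the bound $\dd\tau(v)\ge\tfrac32\vert v\vert_h+\mathscr F(v)$ places $\tau$ inside an open set of $h$-steep temporal functions, and since $\hat f$ is $C^0$-close to the monotone Franco representative $\tilde f$, along every inextendible causal curve the sum $\tau_1+\hat f$ inherits from the Cauchy $\tau_1$ its divergence to $\pm\infty$, with $\hat f$ contributing only in the compatible direction. This last verification is precisely the step for which the author defers to \cite{minguzzi16a,franco10}.
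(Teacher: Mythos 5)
There is a genuine gap, in two places. First, you take the opening assertion (existence of a smooth $h$-steep Cauchy temporal function for an \emph{arbitrary} Riemannian $h$) as available off the shelf from \cite{minguzzi16a}. What that reference provides is a smooth Cauchy $\mathscr{F}$-steep function, and the passage from $\mathscr{F}$-steep to $h$-steep is precisely the nontrivial content of the first assertion: $\mathscr{F}$ vanishes on $\partial C$, so the bound $\dd t(v)\ge \mathscr{F}(v)$ gives no positive lower bound on lightlike vectors, whereas $\Vert v\Vert_h>0$ there. The paper closes this gap by using the stability of global hyperbolicity to pass to a wider cone structure $C'>C$ with a Finsler function $\mathscr{F}'$ whose indicatrix avoids that of $\mathscr{F}$ (hence $\mathscr{F}'$ is bounded away from zero on all of $C$, including $\partial C$), and then shrinking the indicatrix of $\mathscr{F}'$ pointwise by homotheties until $\mathscr{F}'\ge \Vert\cdot\Vert_h$ as well as $\mathscr{F}'\ge\mathscr{F}$ on $C$; a single application of the existence result to $\mathscr{F}'$ then yields one function that is simultaneously $h$-steep and $\mathscr{F}$-steep. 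Your plan never addresses the null directions.

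Second, your application of Theorem \ref{moz} is not licensed as stated: that theorem requires a strictly wider $C^0$ cone structure $\hat C>C$, a function $\underline F$ defined and continuous on all of $\hat C$, and the integral inequality (\ref{mos}) along $\hat C$-\emph{timelike} curves. The signed $g$-distance from a Cauchy hypersurface satisfies the reverse triangle inequality only along $g$-causal curves, and your $\underline F=\mathscr{F}$ lives only on $C$; it is not verified (and not to be expected) that the inequality survives along curves that are $\hat C$-timelike but leave $C$, where the signed distance can decrease without Lipschitz control. This is exactly why the proof of Theorem \ref{nug} first enlarges the cones, applies Franco's construction to $(M,\hat{\mathscr{F}})$, and feeds $\underline F=\hat{\mathscr{F}}$ into Theorem \ref{moz}. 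Your additive scheme $\tau=\tau_1+\hat f$ and the Cauchy verification would go through once both ingredients are repaired (take $h$ complete without loss of generality, so that $\dd\tau(v)\ge\tfrac32\Vert v\Vert_h$ forces $\tau\to\pm\infty$ along inextendible causal curves), but as written both inputs rest on unproved claims. Note also that the paper's own proof of this particular theorem uses neither Theorem \ref{moz} nor Franco's construction and is considerably shorter.
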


This theorem was also obtained by Suhr in a recent work  using different methods \cite[Th.\ 2.3]{suhr15}.

\begin{proof}
 The proof of the existence of a smooth Cauchy $\mathscr{F}$-steep time function in globally hyperbolic spacetimes can be found in \cite{minguzzi16a} (for a previous proof, see \cite{muller11}). One can prove more. Since global hyperbolicity is stable \cite{minguzzi11e,fathi12} one can find a smooth Cauchy steep time function $\tau'$ for a Lorentzian Finsler function $\mathscr{F}'$ (cf.\ (\ref{nia})) with larger cones $C'>C$, such that the indicatrix $\mathscr{F}'{}^{-1}(1)$ of $\mathscr{F}'$ does not intersect that of $\mathscr{F}$. So for every $v \in C_x$,  $\dd \tau'(v) \ge   \mathscr{F}'(v)$ with the bonus that now $\mathscr{F}'$ does not vanish on lightlike vectors, i.e.\ on $\p C_x$.

Let $h$ be any auxiliary Riemannian metric. For every $x\in M$, at $T_xM$ we can shrink through homotheties the indicatrix of $\mathscr{F}'$, so redefining this function but not its cone, in such a way that its indicatrix intersected with $C_x$ is contained in $B^h(x,1)$ (the open unit ball centered at $x$ with respect to the distance induced by $h$). As a consequence, for the Cauchy $\mathscr{F}'$-steep time  function $\tau'$ we have the inclusion $\{v: \dd \tau'(v)=1\}\cap C_x \subset B^h(x,1)$, which implies $\dd \tau'(v) \ge  \mathscr{F}'(v)  \ge \Vert v \Vert_h $ for every $v\in C$. But the fact that the indicatrix of $\mathscr{F}'$ does not intersect that of $\mathscr{F}$ also implies $ \mathscr{F}'(v) \ge   \mathscr{F}(v)$ for every $v\in C$. Thus with the redefinition $\tau' \to \tau$ we get the desired result.
\end{proof}

\begin{theorem} \label{oor}
Let $h$ be an auxiliary Riemannian metric.
In a globally hyperbolic spacetime $(M,g)$ both topology and order can be recovered from the set  $\mathscr{V}$ of smooth Cauchy $h$-steep time  functions. That is:
\begin{itemize}
\item[(a)]
 $(x,y)\in J \Leftrightarrow t(x)\le t(y)$, for every $t\in \mathscr{V}$;
 \item[(b)]  for every open set $O\ni p$ we can find $f,h \in \mathscr{V}$ in such a way that
$p\in \{q\colon f(q)>0\}\cap \{q\colon h(q)<0\}\subset O$.
\end{itemize}
\end{theorem}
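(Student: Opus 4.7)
For part (a), the direction $\Rightarrow$ is immediate since every $t\in\mathscr{V}$ is a time function. For the converse, suppose $(x,y)\notin J$; since $(M,g)$ is globally hyperbolic $J=J_S$, so by the representation~(\ref{rep}) applied to $(M,C)$ there exists a smooth temporal function $\sigma$ with $\sigma(x)>\sigma(y)$. Theorem~\ref{spa} supplies some $\tau_0\in\mathscr{V}$. Setting
\[
t := \tau_0 + \sigma/\epsilon
\]
for $\epsilon>0$ sufficiently small yields a member of $\mathscr{V}$: it is smooth temporal as a sum of two such; it is $h$-steep since $\dd t = \dd\tau_0 + \epsilon^{-1}\dd\sigma \ge \|\cdot\|_h$ on $C$ (the $\sigma$-contribution is nonnegative on causal vectors); it is Cauchy because $\tau_0$ already has image $\mathbb{R}$ along every inextendible continuous causal curve while $\sigma$ is non-decreasing there, so the sum remains surjective on $\mathbb{R}$; and $t(x)-t(y) = [\tau_0(x)-\tau_0(y)] + [\sigma(x)-\sigma(y)]/\epsilon$ becomes positive for $\epsilon$ sufficiently small, giving the required separation.

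For part (b) the plan is to exploit the equivalence of the Alexandrov and manifold topologies in globally hyperbolic spacetimes: given $p\in O$, pick $r\ll p\ll s$ with $\overline{I^+(r)\cap I^-(s)}\subset O$, and construct $f_1,f_2\in\mathscr{V}$ whose zero level sets are smooth spacelike Cauchy hypersurfaces bending hyperboloidally around $p$, so that the wedge $\{f_1>0\}\cap\{f_2<0\}$ is a bounded lens-shaped open set contained in $I^+(r)\cap I^-(s)$. The Minkowski model $(\mathbb{R}^{1+n},-\dd t^2+|\dd x|^2)$ with $h$ Euclidean already shows that such functions exist: for any $\alpha>0$ and $c_\pm>\alpha$, the functions
\[
f_\pm(t,x) := (\sqrt{2}+\alpha)\,t \mp \alpha\sqrt{1+|x|^2} + c_\pm
\]
are smooth Cauchy $h$-steep temporal, and a direct computation shows that $\{f_+>0\}\cap\{f_-<0\}$ is a bounded lens around the origin of spatial radius $\sqrt{((c_++c_-)/2\alpha)^2-1}$, which can be shrunk to fit any neighborhood by taking $c_\pm\to\alpha^+$. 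To transfer this to $M$, I would start from $2\tau_0$ with $\tau_0\in\mathscr{V}$ given by Theorem~\ref{spa}: $2\tau_0$ is $h$-steep with slack $\|\cdot\|_h$, so $2\tau_0 + \phi$ remains in $\mathscr{V}$ whenever $\phi$ is a smooth bounded function satisfying $\dd\phi(y)\ge -\|y\|_h$ on $C$ (in particular, any compactly supported $h$-Lipschitz $\phi$ with Lipschitz constant at most $1$). Choose $\phi_\pm$ supported in a relatively compact chart around $p$ in which $g$ is uniformly close to Minkowski, reproducing the hyperboloidal bending of the model, and set $f_i := 2\tau_0 + \phi_i$ after suitable constant shifts so that $f_1(p)>0>f_1(r)$ and $f_2(p)<0<f_2(s)$.

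The main obstacle is precisely the compatibility between global $h$-steepness and localized level-set bending: no perturbation that is itself causal non-decreasing on $M$ can be compactly supported, since inextendible causal curves exhaust $M$ and such a function would have to return to its initial value. The slack carried by $2\tau_0$ resolves this by permitting genuinely local $h$-Lipschitz perturbations, whose differentials are only required to satisfy $\dd\phi(y)\ge -\|y\|_h$ on $C$; this is much weaker than $\dd\phi\ge 0$ and is achieved by smooth bump-type functions. Verifying that the resulting lens lies inside $O$ is then a compactness/continuity argument, using that the Alexandrov diamond $I^+(r)\cap I^-(s)$ has a positive buffer inside $O$ and that all relevant quantities depend continuously on the parameters $\alpha$ and $c_\pm$; if a Lipschitz rather than smooth candidate is more convenient at the intermediate step, Theorem~\ref{moz} can then be invoked to smooth it into a member of $\mathscr{V}$ without destroying any of the properties above.
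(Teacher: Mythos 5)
Your proposal is essentially correct, but both halves take routes genuinely different from the paper's. For (a), the paper does \emph{not} invoke the Seifert representation (\ref{rep}): it builds the separating function directly, by noting that $M\backslash (J^+(x)\cup J^-(y))$ is globally hyperbolic, taking a Cauchy hypersurface of it (which is Cauchy for $M$ and separates $x$ from $y$), producing an adapted Geroch--Cauchy time function, and then upgrading it to a smooth Cauchy steep function via the construction of \cite{minguzzi16a} together with the stability of Cauchy temporal functions under cone widening (the widening is what converts $\mathscr{F}'$-steepness into $h$-steepness). Your alternative --- take $\sigma$ temporal with $\sigma(x)>\sigma(y)$ from (\ref{rep}) and set $t=\tau_0+\sigma/\epsilon$ with $\tau_0\in\mathscr{V}$ from Theorem \ref{spa} --- is shorter and modular, and your verifications ($h$-steepness, the Cauchy property via boundedness-from-one-side of $\sigma$ along inextendible causal curves, the divergence of the separation as $\epsilon\to 0$) all hold; what it buys is brevity at the cost of using (\ref{rep}) as a black box, whereas the paper's route stays inside the steep-function machinery and yields the extra information that the separating function can be taken $\mathscr{F}$-steep with prescribed Cauchy level sets. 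For (b), the paper perturbs a steep Cauchy function $t$ by the volume function $\tau^-_\varphi$ of a bump supported on a compact $Q\subset O$; since $\tau^-_\varphi$ is non-decreasing along causal curves no slack is needed, but the result is only $C^1$ and a final smoothing step (density of $C^\infty$ in $C^1$) is required. Your mechanism --- double $\tau_0$ to create slack $\Vert\cdot\Vert_h$ and absorb a compactly supported smooth $1$-Lipschitz bump, arranging the two perturbed functions to agree up to constants outside the bump's support so that the wedge $\{f_1>0\}\cap\{f_2<0\}$ is trapped inside that support --- is sound, stays smooth throughout, and can be made rigorous in a few lines (e.g.\ take $\phi_2=-\phi_1$ and equal constants, so the wedge is $\{\vert 2\tau_0-2\tau_0(p)\vert<\phi_1\}\subset\mathrm{supp}\,\phi_1$). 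The Minkowski hyperboloid model is an unnecessary detour (and your sign conventions for $c_\pm$ there are inconsistent with $f_\pm(0,0)$ having the required signs), and the containment of the lens in $O$ should be argued by the support trick above rather than by a continuity-in-parameters argument, but these are presentational rather than substantive defects.
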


\begin{remark} \label{mja}
Thus topology and order can be recovered from the set of smooth  Cauchy $\mathscr{F}$-steep temporal  functions. In fact, if we take $h$ so that its balls do not intersect the indicatrices of $\mathscr{F}$ we have that every smooth Cauchy $h$-steep time  function is a smooth  Cauchy $\mathscr{F}$-steep temporal  function. Notice that $h$ can be chosen complete.
\end{remark}

\begin{proof}
Let $q\notin J^+(p)$ then the spacetime  $N=M\backslash (J^+(p)\cup J^{-}(q))$ is globally hyperbolic. Any Cauchy hypersurface $S$ for $N$ is also  a Cauchy hypersurface for $M$ with $p\in I^+(S)$ and $q\in I^{-}(S)$.
The Geroch topological splitting theorem \cite{hawking73} implies that we can find a Cauchy time function $t$ for $M$ so that $S=t^{-1}(0)$, $t(p)>1$, $t(q)<-1$ (for instance, apply the theorem to $M\backslash J^{-}(S)$ and to $M\backslash J^{+}(S)$  and reparametrize the level sets so obtained into a global Cauchy time function). Let $S_t$ be its constant slices. Inspection of the proof in \cite{minguzzi16a} shows that we can find a smooth Cauchy $\mathscr{F}$-steep temporal function  $t'$ such that  $S'_0:=t'^{-1}(0)\subset t^{-1}([-1,1])$, $t'>t$ for $t>1$, and $t'<t$ for $t<-1$.
An improvement \cite[Th.\ 47]{minguzzi17} over the classical stability result for global hyperbolicity \cite{minguzzi11e,fathi12,samann16,minguzzi17} states that the Cauchy temporal functions are stable so
we can widen the cones while preserving global hyperbolicity and the Cauchy property of $S'_0$. Let $\mathscr{F}'$ be a Lorentzian Finsler function for a wider cone structure, $C'>C$,  chosen so that the indicatrix $\mathscr{F}'{}^{-1}(1)$ intersected with $C$ is contained in the unit ball of $h$, and hence so that any $\mathscr{F}'$-steep
time function $f$ satisfies $\dd f(v)> \Vert v \Vert_h $ for $v\in C$. Repeating the  above argument we find a smooth Cauchy $\mathscr{F}'$-steep temporal function  $t''$ such that  $S''_0\subset {t'}^{-1}([-1,1])$, $t''>t'$ for $t'>1$, and $t''<t'$ for $t'<-1$.  In particular $t''(p)>1>0>-1>t''(q)$. This result proves the representability of the order by smooth $h$-steep Cauchy time functions.


For the topology, let $p\in O$, $O$ open, and let $t$ be a  smooth Cauchy $h$-steep time function such that $t(p)=0$, cf.\ Th.\ \ref{spa}. Let $S_0=t^{-1}(0)$ and let $Q\subset O$ be a compact neighborhood of $p$ such that $Q=J^+(Q)\cap J^{-}(S_0)\cup J^-(Q)\cap J^{+}(S_0)$, and let $\varphi$ be a positive smooth function  supported on $Q$.  Let $\tau^{-}_\varphi$ be the  volume function \cite{chrusciel13}. We know that $\tau^{-}_\varphi$ is $C^1$ with past directed timelike gradient  wherever $E^{-}(q)$ intersect the locus $\varphi>0$ and with a vanishing differential otherwise.
Thus $f:=t+ \tau^{-}_\varphi$ is a $C^1$   Cauchy $h$-steep time function. Notice that the locus $f=0$ coincides with $t=0$ outside $Q$, $f(p)>0$ and $f>0$ only for $t>0$ or inside $Q$. Inverting the time orientation we get a  $C^1$   Cauchy $h$-steep time function $u$ such that $u=0$ coincides with  $t=0$ outside $Q$, $u(p)<0$ and $ u<0$ only for $t<0$ or inside $Q$. Thus $p\in \{q\colon f(q)>0\}\cap \{q\colon u(q)<0\}\subset O$. The smooth version of this inclusion is due to the density of $C^\infty(M,\mathbb{R})$  in
$C^1(M,\mathbb{R})$, see \cite[Th.\ 2.6]{hirsch76}.
\end{proof}

\begin{theorem} \label{nug}
Let $(M,g)$ be a globally hyperbolic spacetime and let $\mathscr{U}$ be the family of smooth, Cauchy, $\mathscr{F}$-steep temporal functions which are $h$-steep for some complete Riemannian metric $h$ (dependent on the function). We have the identity
\begin{equation} \label{dsf}
d(p,q)=\inf\{[f(q)-f(p]^+\colon  f \in \mathscr{U}\}
.\end{equation}
\end{theorem}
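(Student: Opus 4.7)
The strategy combines Franco's non-differentiable distance formula \cite{franco10}, the cone-widening device used in Theorems \ref{spa} and \ref{oor}, and the smoothing Theorem \ref{moz}; stability of global hyperbolicity and of the Lorentzian distance \cite{minguzzi17,minguzzi11e,fathi12} provides the slack needed to coordinate the three ingredients. The inequality $\inf\{[f(q)-f(p)]^+ : f\in\mathscr{U}\}\ge d(p,q)$ is immediate: for $f\in\mathscr{U}$ and a continuous causal curve $\gamma\colon[0,1]\to M$ from $p$ to $q$,
\[
f(q)-f(p) = \int_0^1 \dd f(\dot\gamma)\,\dd t \ge \int_0^1 \mathscr{F}(\dot\gamma)\,\dd t = \ell(\gamma),
\]
and the supremum over $\gamma$ gives $f(q)-f(p)\ge d(p,q)$ when $(p,q)\in J$; the case $(p,q)\notin J$ is trivial since $d(p,q)=0$.

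For the reverse inequality fix $\epsilon>0$. When $(p,q)\notin J$, I choose a complete Riemannian metric $h$ whose unit balls are disjoint from the $\mathscr{F}$-indicatrix; Theorem \ref{oor}(a) together with Remark \ref{mja} then produces a smooth Cauchy $h$-steep time function $f$ which is automatically $\mathscr{F}$-steep, with $f(p)>f(q)$, so $f\in\mathscr{U}$ and $[f(q)-f(p)]^+=0$. When $(p,q)\in J$ the plan is to widen, apply Franco, and smooth. By stability I choose a Lorentzian Finsler function $\mathscr{F}'>\mathscr{F}$ preserving global hyperbolicity and satisfying $d'(p,q)\le d(p,q)+\epsilon/3$, together with a complete Riemannian $h$ such that $\mathscr{F}'(v)\ge \mathscr{F}(v)+3\|v\|_h$ for every $v\in C$; this is the quantitative version of the scaling argument used in the proof of Theorem \ref{spa}. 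Franco's theorem applied to $(M,\mathscr{F}')$ then yields a locally Lipschitz $\mathscr{F}'$-steep time function $\tau'$ with $\tau'(q)-\tau'(p)\le d'(p,q)+\epsilon/3$.

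The smoothing step closes the argument. I set $\underline F(v):=\mathscr{F}'(v)-\|v\|_h$ on a $C^0$ proper cone $\hat C>C$ with $\hat C\subset C'$; this is continuous and homogeneous of degree one in the fiber, and $\dd\tau'\ge\mathscr{F}'\ge\underline F$ almost everywhere, so the lower integral bound in (\ref{mos}) holds for every $\hat C$-timelike curve. The one-sided version of Theorem \ref{moz} (with $\overline F$ dropped), applied with $\alpha<\epsilon/6$, produces a smooth $\hat\tau$ with $|\hat\tau-\tau'|<\alpha$ and
\[
\dd\hat\tau(v)\ge \underline F(v)-\|v\|_h = \mathscr{F}'(v)-2\|v\|_h \ge \mathscr{F}(v)+\|v\|_h \qquad (v\in C),
\]
so $\hat\tau$ is simultaneously smooth, temporal, $\mathscr{F}$-steep and $h$-steep. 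Completeness of $h$ together with $\dd\hat\tau\ge\|\cdot\|_h$ makes $\hat\tau$ unbounded along every inextendible continuous causal curve (whose $h$-arclength is infinite on each side), so $\hat\tau$ is Cauchy and lies in $\mathscr{U}$. Finally $\hat\tau(q)-\hat\tau(p)\le \tau'(q)-\tau'(p)+2\alpha\le d(p,q)+\epsilon$, and sending $\epsilon\to 0$ yields the formula.

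The main obstacle is the quantitative coordination among $\mathscr{F}$, $\mathscr{F}'$ and $h$: the widening must be generous enough to absorb the $\|v\|_h$ lost to smoothing and to leave a further $\|v\|_h$ of residual $h$-steepness for the Cauchy property, yet small enough that $d'(p,q)-d(p,q)\le\epsilon/3$ and global hyperbolicity is preserved. This bookkeeping is exactly what the combined stability results for global hyperbolicity and for Lorentzian distance recorded in \cite{minguzzi17} make possible; without them one could not simultaneously control distance, cone widening, and the complete metric $h$ against which the smoothing is performed.
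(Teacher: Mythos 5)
Your overall architecture coincides with the paper's: the easy direction by integrating $\dd f\ge\mathscr{F}$ along causal curves, the case $(p,q)\notin J$ via the order representation of Theorem \ref{oor} and Remark \ref{mja}, and the hard direction by widening the cones, invoking Franco's continuous construction, and smoothing with Theorem \ref{moz}. Two steps, however, are gaps as written. First, the bridge from Franco's function $\tau'$ to hypothesis (\ref{mos}): you assert that $\tau'$ is locally Lipschitz with $\dd\tau'\ge\mathscr{F}'$ almost everywhere and then integrate. Neither half is free: Franco's function is a locally finite sum of Lorentzian distance functions $\pm\hat d(r_i,\cdot)$, whose local Lipschitz regularity and a.e.\ eikonal-type gradient bound require a separate argument, and even granting both, an a.e.\ differential inequality on $M$ does not automatically integrate along an arbitrary timelike curve, which may spend positive parameter time inside the exceptional null set. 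The paper avoids all of this: it derives the integrated bound $\hat f(b)-\hat f(a)\ge\hat\ell(x)$ for every $\hat C$-causal curve directly from the reverse triangle inequality for $\hat d$, by partitioning the curve subordinately to the supports of the summands $\hat d_r^{\pm}$. You should replace your a.e.\ argument with that one; it is exactly what (\ref{mos}) asks for.

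Second, the coordination you yourself flag as ``the main obstacle'' is not actually resolved by citing stability results. Stability of the Lorentzian distance gives $d'(p,q)\le d(p,q)+\epsilon/3$ only for widenings that are small where it matters, while completeness of the gap metric $h$ in $\mathscr{F}'\ge\mathscr{F}+3\Vert\cdot\Vert_h$ forces the widening to be bounded below by a complete metric everywhere; these requirements pull in opposite directions. The missing observation, which is the paper's, is that $d'(p,q)$ depends on $\mathscr{F}'$ only on a compact neighborhood $K$ of $\hat J^+(p)\cap\hat J^-(q)$, so one may keep the widening small on $K$ (where a small, possibly incomplete $h$ fits into the indicatrix gap) and make it arbitrarily generous outside $K$ (where the gap then dominates a fixed complete metric $h'$), finally patching the two into a complete metric that certifies the Cauchy property of the smoothed function. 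Your single complete $h$ can indeed be produced this way, but the localization to the compact diamond is the idea that makes the bookkeeping consistent and it needs to be stated, not delegated to the stability theorems.
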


\begin{proof}
Suppose first that $(p,q)\notin J$ so that $d(p,q)=0$. We know from Remark \ref{mja} that the functions in $\mathscr{U}$ represent the order, so there is $f\in \mathscr{U}$   such that $f(p)> f(q)$, thus $d(p,q)=0=[f(q)-f(p)]^+$.

Now, suppose that  $(p,q)\in J$.
Let $f$ be smooth and $\mathscr{F}$-steep.
Given a continuous causal curve $x\colon [0,1] \to M$ connecting $p$ to $q$,
\[
f(q)-f(p)=\int_x \dot f  \dd t=\int_x \dd f(\dot x)  \dd t \ge \int_x \mathscr{F}(\dot x)  \dd t= \ell(x).
\]
Thus taking the supremum over the connecting continuous causal curves we get $d(p,q) \le f(q)-f(p)= [f(q)-f(p)]^+$, and taking the infimum over the family of functions we obtain  the inequality $\le$.

For the other inequality we have to show that for every $\epsilon>0$ we can find a smooth Cauchy  $\mathscr{F}$-steep temporal function  $f$ such that $f(q)-f(p) \le d(p,q)+\epsilon$.
We can enlarge the cones while preserving global hyperbolicity. In particular, we can find a Lorentzian $\hat{\mathscr{F}}$ globally hyperbolic, with $\hat C>C$  such that the indicatrices inside the cones do not intersect and $d(p,q) \le \hat{d}(p,q)\le d(p,q)+ \epsilon/4$. The first inequality is clear, while the second inequality follows from \cite[Th.\ 58,59,61]{minguzzi17}.


Let $K$ be a compact neighborhood of $\hat J^+(p)\cap \hat J^{-}(q)$.
Notice that apart from the definition of the cone $\hat C$, so far  $\hat{\mathscr{F}}$ is unconstrained outside $K$. Let $h'$ be a complete Riemannian metric. We choose the indicatrices $\hat{\mathscr{I}}$ of $\hat{\mathscr{F}}$ outside $K$ to be so close to the origin  that their intersection with $C$ is contained in the unit sphere bundle of $h'$, that is $\hat{\mathscr{F}}(v)>\vert v\vert_{h'}$ for every $v\in C$.

Notice that since the indicatrices $\hat{\mathscr{I}}$ and $\mathscr{I}$ do not intersect we can find a Riemannian metric $h$ so small that for every $v\in  C$, $\hat{\mathscr{F}}(v)\ge {\mathscr{F}}(v)+2 \vert v\vert_h$ and outside $K$ the unit balls of $h$ contain those of $h'$ of radius 2, that is $\vert \cdot \vert_{h'}>2\vert \cdot \vert_{h}$ outside $K$.

Let us apply Franco's construction to $(M, \hat{\mathscr{F}})$. Inspection of his proof shows that he constructs a continuous function $\hat f$ such that $\hat f(q)-\hat f(p) <\hat{d}(p,q)+\epsilon/4$ by means of a (locally finite) sum of functions of the form $\hat{d}_r^+(\cdot):=\hat d(r,\cdot)$, $\hat{d}_r^-(\cdot):=-\hat d(\cdot, r)$, where $r$ runs over a countable set of points $\{r_i\}$. Any point of the manifold belongs to the support of one of these functions. As a consequence, for every $(a,b)\in \hat J$, Franco's function satisfies $\hat f(b)-\hat f(a)\ge  \hat\ell(x)$ where $x$ is any $\hat C$-causal curve connecting $a$ and $b$ (just partition the causal curve so that each part belongs to the support of a function $\hat{d}_r^+$ or $\hat{d}_r^-$ and use the reverse triangle inequality for $\hat d$).

We can use Theorem \ref{moz} with $\underline F=\hat{\mathscr{F}}$,  thus we can find $f$ smooth such that $\vert  f-\hat f\vert<\epsilon/4$ and $\dd  f(v) \ge \hat{\mathscr{F}}(v)-\vert v\vert_h\ge \mathscr{F}(v)+\vert v\vert_h$ on every $C$-causal vector $v$. In particular, $f$ is
$h$-steep and
$\mathscr{F}$-steep. But we have also  $\hat{\mathscr{F}}(v)-\vert v\vert_h\ge  \vert v\vert_{h'}-\vert v\vert_h \ge \frac{1}{2} \vert v\vert_{h'}$ outside $K$, thus  $f$  is $h'/4$-steep outside $K$ and  $h$-steep inside $K$, which implies that $f$ is $h''$-steep for some complete Riemannian metric $h''$, and hence Cauchy.
Finally,
\begin{align*}
 f(q)- f(p)&\le  \hat f(q)-\hat f(p)+\vert \hat f(q)-f(q)\vert + \vert \hat f(p)-f(p)\vert \le \hat{d}(p,q)+\tfrac{3}{4}\epsilon\\
&\le  {d}(p,q)+\epsilon.
\end{align*}
\end{proof}

\section*{References}


\providecommand{\newblock}{}

\end{document}